\pgfplotsset{compat=newest}
\crefname{figure}{Figure}{Figures}
\crefname{equation}{}{}
\crefname{secinapp}{Appendix}{Appendices}
\Crefname{secinapp}{Appendix}{Appendices}
\tikzset{
	halfarrow/.style={postaction={decorate},
		decoration={markings,mark=at position .5 with
			{\arrow{stealth}}}}}
\newcommand\defn[1]{\textbf{\color{black}#1}}
\newcommand{\R}{\mathbb{R}}
\newcommand{\Z}{\mathbb{Z}}
\newcommand{\A}{\mathcal{A}}
\newcommand{\defeq}{\coloneq}
\newcommand{\Dns}{\mathcal{D}_{n,s}}
\newcommand{\An}{\mathcal{A}_{n-1}}
\newcommand{\Bn}{\mathcal{B}_n}
\newcommand{\Dn}{\mathcal{D}_n}
\renewcommand{\epsilon}{\varepsilon}
\newcommand*{\abs}[1]{\left\lvert#1\right\rvert}
\newcommand{\precdot}{\prec\mathrel{\mkern-5mu}\mathrel{\cdot}}
\newtheorem{theorem}{Theorem}[section]
\newtheorem{prop}[theorem]{Proposition}
\newtheorem{lemm}[theorem]{Lemma}
\theoremstyle{definition}
\newtheorem{defi}[theorem]{Definition}
\newtheorem{bsp}[theorem]{Example}
\newtheorem{bem}[theorem]{Remark}
\newtheorem{nota}[theorem]{Notation}
\crefname{defi}{Definition}{Definitions}
\newsavebox{\imagebox}
\DeclareMathOperator{\rk}{rk}
\DeclareMathOperator{\ty}{typ}
\newcommand{\mA}{{\mathcal A}}
\newcommand{\mB}{{\mathcal B}}
\newcommand{\mD}{{\mathcal D}}
\newcommand{\mF}{{\mathcal F}}
\newcommand{\mI}{{\mathcal I}}
\newcommand{\mL}{{\mathcal L}}
\newcommand{\mR}{{\mathcal R}}
\newcommand{\mT}{{\mathcal T}}
\newcommand{\set}[1]{\left\{#1\right\}}
\subjclass[2020]{\noindent Primary 52C35 $\cdot$ Secondary 05C45 $\cdot$ 51F15 $\cdot$ 52C40}
\keywords{Hamiltonian cycle, hyperplane arrangement, simplicial arrangement, supersolvable\\\hspace*{3.7cm}arrangement, supersolvable oriented matroid}
\title{Hamiltonian Cycles in Simplicial and Supersolvable Hyperplane Arrangements} 
\author[V. Körber]{Veronika Körber}
\author[T. Schnieders]{Tobias Schnieders}
\author[J. Stricker]{Jan Stricker}
\author[J. Walizadeh]{Jasmin Walizadeh}
\address{Institute for Mathematics, Goethe University Frankfurt, Robert-Mayer-Str. 10,\newline
	60325 Frankfurt am Main, Germany}
\address{Eberhard Karls Universität Tübingen,
	Geschwister-Scholl-Platz,\newline
	72074 Tübingen, Germany}
\address{Universität des Saarlandes, Campus,\newline66123 Saarbrücken, Germany\newline}
\email{veronika.koerber@uni-tuebingen.de}
\email{schnieders@math.uni-sb.de}
\email{stricker@math.uni-frankfurt.de}
\email{j.walizadeh@protonmail.com}
\begin{document}
	\begin{abstract}
		Motivated by the Gray code interpretation of Hamiltonian cycles in Cayley graphs,
		we investigate the existence of Hamiltonian cycles in tope graphs of hyperplane arrangements, with a focus on simplicial, reflection, and supersolvable arrangements.
		We prove that all supersolvable hyperplane arrangements have Hamiltonian cycles, offering a constructive proof based on their inductive structure. The proof can be extended to all supersolvable oriented matroids. 
		Furthermore, extending earlier results by Conway, Sloane, and Wilks, we prove that all restrictions of finite reflection arrangements, including all Weyl groupoids and crystallographic arrangements, admit Hamiltonian cycles. Finally, we confirm Hamiltonicity for all 3-dimensional simplicial
		arrangements listed in the Grünbaum--Cuntz catalogue, using that the infinite families
		listed are supersolvable arrangements.
		
	\end{abstract} 
	\maketitle

	\section{Introduction}
	There are many algorithms that construct all subsets or all permutations of a finite set successively. Such an algorithm is called a Gray code, if there is a way to successively derive one object from the last by a minimal change. Nijenhuis and Wilf describe a Gray code for the subsets of a finite set by adding or removing only one element in each step \cite{nijenhuis2014combinatorial}. Such Gray codes for permutations were given by Johnson \cite{johnson1963generation} and Trotter \cite{trotter1962algorithm} in the 1960s.
	
	We can interpret these Gray codes as Hamiltonian cycles in Cayley graphs of permutation groups generated by simple transpositions. It is widely conjectured that all finite Cayley graphs have a Hamiltonian cycle. Conway, Sloane, and Wilks found Hamiltonian cycles in the Cayley graphs of all finite reflection groups \cite{conway}.
	Additionally, Takato Inoue and Hiroyuki Yamane proved that the Cayley graphs of finite Weyl groupoids admit a Hamiltonian cycle \cite{yamane2021hamilton,inoue2023hamiltonian}. Reflection groups are groups that can be represented by hyperplane arrangements, where the group elements correspond to reflections along the hyperplanes. The graph of regions, also called tope graph, is precisely the Cayley graph for a certain set of generators.
	
	For each hyperplane arrangement, we can interpret the region graph as a tope graph, which is a subgraph of the cube graph. Therefore, for reflection groups, a Hamiltonian cycle on the Cayley graph is the same as a Hamiltonian cycle on the tope graph. In general, tope graphs of hyperplane arrangements do not have a Hamiltonian cycle.
	
	We consider supersolvable arrangements and oriented matroids and construct Hamiltonian cycles for all of them. While writing this paper, we learned that Sofia Brenner, Jean Cardinal, Thomas McConville, Arturo Merino, and Torsten Mütze \cite{brenner2025combinatorial} simultaneously and independently reached the same results for supersolvable hyperplane arrangements.
	
	In addition, we investigate the reflection arrangements and their restrictions, which are simplicial arrangements. These arrangements include the finite connected Weyl groupoids and crystallographic arrangements in dimension 3 and above. 
	We extend the proof of Conway, Sloane, and Wilks to all restrictions of reflection arrangements. These arrangements are also of particular interest, since it is conjectured that all inscribable zonotopes arise from these arrangements \cite{sanyal22}.
	
	Lastly, we consider the Grünbaum--Cuntz Catalogue of all known simplicial arrangements in dimension 3 \cite{CEL} and show that all their tope graphs have a Hamiltonian cycle. 
	
	\textbf{Acknowledgement.} This paper grew out of a project within a \emph{Dive into Research}, which grants research experience to undergraduates. We thank Professors Michael Cuntz, Lukas Kühne, and Raman Sanyal, who organised the Dive into Research, and we want to give special thanks to Professor Raman Sanyal, who posed the starting question. Jan Stricker was supported by the SPP 2458 \emph{Combinatorial Synergies}, funded by the 
	Deutsche Forschungsgemeinschaft (DFG, German Research Foundation) project ID 539866293. During the process of publication, Tobias Schnieders and Veronika Körber were supported by the DFG project ID 286237555. We are also thankful for the funding for the Dive into Research from the DFG priority programme SPP 2458 \emph{Combinatorial Synergies}.
	
	\section{Preliminaries}\label{section2}
	We start by defining the objects we are working with and state some of their basic properties. Here, for a given positive integer $m$, we always denote the set $\{1,2,\ldots,m\}$ as $[m]$. All computer calculations were performed using \texttt{SageMath} \cite{sagemath}.
	\begin{defi}
		A \defn{hyperplane} $H \subseteq \R^n$ is a codimension-1 affine subspace, which we can describe as
		\begin{align*}
			H\defeq\{x\in \R^n~ \mid ~\alpha_1x_1+\alpha_2x_2+\cdots+\alpha_nx_n=b\}
		\end{align*}
		for some $\alpha=(\alpha_1,\ldots,\alpha_n)  \in \R^n\setminus\{0\}$ and some $b\in \R$. The vector $\alpha$ is called a \defn{normal} of $H$. We denote a hyperplane defined by $\alpha$ and $b=0$ as $H_{\alpha}$.
	\end{defi}
	
	The choice of a normal vector induces an orientation of the hyperplane.
	\begin{nota}
		We denote the closed/open half spaces of a hyperplane $H_{\alpha}$ as
		\begin{align*}
			H_{\alpha}^{\geq} &\defeq\{x\in \R^n \mid \alpha^tx\geq 0\},\\
			H_{\alpha}^{\leq} &\defeq\{x\in \R^n \mid \alpha^tx\leq 0\},\\
			H_{\alpha}^{>} &\defeq\{x\in \R^n \mid \alpha^tx>0\},\\
			H_{\alpha}^{<} &\defeq\{x\in \R^n \mid \alpha^tx<0\}.\\
		\end{align*}
	\end{nota}
	\begin{defi}
		A \defn{hyperplane arrangement} $\mA$ is a finite non-empty set of hyperplanes. If all hyperplanes in $\mA$ contain the origin, we call $\mA$ a \defn{central} hyperplane arrangement.
	\end{defi}
	
	In the rest of the paper we focus only on central hyperplane arrangements.
	
	\begin{defi}
		The \defn{intersection lattice} of a central hyperplane arrangement $\mA = \set{H_1,\ldots,H_m}$ in $\R^n$ is the set 
		\[
		\mL(\mA) = \set{\bigcup_{i \in I} H_i \mid I \subset [m]},
		\]
		partially ordered by reverse inclusion. This is a geometric lattice. The \defn{rank} of an element $X \in \mL(\mA)$ is its codimension in $\R^n$. The \defn{rank} of a hyperplane arrangement $\mA$ is the codimension of $\bigcap_{H \in \mA} H$. If the rank equals the dimension of the ambient space, we call $\mA$ \defn{essential}. For an essential hyperplane arrangement, the \defn{atoms} of the lattice are the hyperplanes, the \defn{coatoms} are the rank-$(n-1)$ elements.
	\end{defi}
	\begin{defi}
		The complement of the arrangement in the ambient space $\R^n\setminus \bigcup_{H\in\mA}H$ is disconnected. The closures of the connected components are called \defn{regions}. We denote the set of regions as $\mR(\mA)$.
		The intersection of two regions with affine codimension $1$ is a \defn{wall} of both regions. Each wall is inside a hyperplane of the arrangement. A region is called \defn{simplicial} if the normal vectors of the hyperplanes of all its walls are linearly independent. If all regions of $\mA$ are simplicial, we call $\mA$ a \defn{simplicial} hyperplane arrangement.
	\end{defi}
	\begin{defi}
		Let $\mA$ be a hyperplane arrangement. By the \defn{deletion} of a hyperplane $H$, we obtain the hyperplane arrangement $\mA \setminus \set{H}$ in $\mathbb{R}^n$. We call an arrangement that results by multiple deletions from $\mA$, a \defn{subarrangement} of $\mA$.
		
		The \defn{restriction} of $\mA$ to a hyperplane $H$ results in a hyperplane arrangement
		\[\mA^H = \set{H' \cap H \mid H \neq H' \in \mA}.\]
	\end{defi}
	
	When passing to a subarrangement, regions merge or stay inert. Hence, we can define the following surjective map of regions.
	
	\begin{defi}\label{defi-surjSubArr}
		Let $\mA'$ be a subarrangement of $\mA$.
		We define a map
		\begin{align*}
			&\pi\colon \mR(\mA) \rightarrow \mR(\mA'),\\
			&\text{such that } \pi(R) \text{ is the unique region of }\mA' \text{ containing } R.
		\end{align*}
	\end{defi}
	\begin{bem}
		Clearly, the map presented in \cref{defi-surjSubArr} is well-defined and surjective.
	\end{bem}
	
	\begin{defi}
		Let $\mA=\{H_1,\ldots,H_m\}$ be a central hyperplane arrangement with a fixed order and orientation of the hyperplanes. A \defn{tope} $s_R$ of a region $R$ is a map $s_R \colon [m] \rightarrow \{-, +\}$, where $s_R(k) = +$ if $R \subseteq H_k^{\geq}$ and  $s_R(k) = -$ if $R \subseteq H_k^{\leq}$. We often identify $s_R$ with the vector $(s_R(1),s_R(2),\ldots,s_R(m)) \in \set{-,+}^m$.
		
		The collection of these functions $s \colon [m] \rightarrow \{-, +\}$ forms the vertex set of the \defn{tope graph} for a hyperplane arrangement, with two such functions (vertices) being adjacent if their values differ in exactly one position. We note the tope graph of $\mA$ as $\mT(\mA)$.
	\end{defi}
	
	For our cases, it is helpful to define the type of an edge in tope graphs of hyperplane arrangements.
	
	\begin{defi}
		Let $\mA$ be a hyperplane arrangement. The \defn{type} of an edge $e$ between two regions $R_1$ and $R_2$ is the hyperplane $H\in \mA$, which contains the separating wall between $R_1$ and $R_2$. We write this as $\ty(e)=H$.
	\end{defi}
	
	We describe how the tope graph behaves under the deletion of hyperplanes from an arrangement.
	
	\begin{lemm}\label{contracHam}
		Let $\mA$ be a hyperplane arrangement and $\mA'=\mA\setminus\{H_1,\ldots, H_k\}$ a subarrangement. Let $E^-=\{e\in E(\mT(\mA))\mid\ty(e)\in \{H_1,\ldots, H_k\}\}$ be the set of edges of a type in the set of deleted hyperplanes. The tope graph of $\mA'$ is the graph that we obtain by contracting all edges in $E^-$ in $\mT(\mA),$ identifying parallel edges and deleting loops.
	\end{lemm}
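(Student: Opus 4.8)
The plan is to reorder the hyperplanes so that $H_1,\dots,H_k$ are the deleted ones, and to argue entirely with the sign-vector picture of regions. With this convention the surjection $\pi\colon\mR(\mA)\to\mR(\mA')$ from \ref{surjSubArr} simply forgets the coordinates of the deleted hyperplanes: if $R\subseteq R'$ then $R'$ lies on the side $s_R(i)$ of $H_i$ for every undeleted $H_i$, so $s_{R'}$ is the restriction of $s_R$ to the hyperplanes of $\mA'$. I would then prove the lemma in two steps: (1) the connected components of the spanning subgraph $(\mR(\mA),E^-)$ are exactly the fibres $\pi^{-1}(R')$ with $R'\in\mR(\mA')$, which identifies the vertices of the contracted graph with $\mR(\mA')=V(\mT(\mA'))$ via $\pi$; and (2) the edges of $\mT(\mA)$ not in $E^-$ map under $\pi$ onto $E(\mT(\mA'))$, so that after deleting parallel edges and loops one recovers $\mT(\mA')$ exactly.

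For step (1), one direction is immediate from the sign-vector description of edge types: an edge of type $H_j$ with $j\le k$ joins two regions agreeing on all undeleted coordinates, hence with a common $\pi$-image, so each component of $(\mR(\mA),E^-)$ lies in one fibre; conversely, any two adjacent regions in a common fibre differ in a single coordinate, necessarily a deleted one, so that edge is in $E^-$. The real content — and the step I expect to be the main obstacle — is that each fibre $\pi^{-1}(R')$ is \emph{connected} by $E^-$-edges. Here the regions of $\mA$ contained in the region $R'$ of $\mA'$ are precisely the cells into which the convex set $R'$ is cut by $H_1,\dots,H_k$. Given two such cells, pick interior points $p_1,p_2$; for generic choices the segment $[p_1,p_2]$ stays in the interior of $R'$ by convexity and crosses $H_1,\dots,H_k$ transversally and one at a time. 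At each crossing point a small ball is split by a single $H_j$ ($j\le k$) into two half-balls lying in two cells that share a codimension-$1$ wall of type $H_j$; the sequence of crossings is therefore a walk in $(\mR(\mA),E^-)$ between the two chosen cells. (One could instead reduce to $k=1$, deleting hyperplanes one at a time: then each fibre contains at most two regions and the claim is trivial, at the cost of some bookkeeping when composing the successive contractions.)

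For step (2), an edge $e=\{R_1,R_2\}$ of $\mT(\mA)$ with $\ty(e)=H_i$ and $i>k$ has endpoints differing only in coordinate $i$, so $\pi(R_1)$ and $\pi(R_2)$ differ exactly in that coordinate of $\mA'$; in particular they are distinct and adjacent in $\mT(\mA')$, so $e$ projects to a non-loop edge of type $H_i$. Conversely, given an edge $\{R_1',R_2'\}$ of $\mT(\mA')$ with separating wall $W\subseteq H_i$, choose $q$ in the relative interior of $W$ avoiding the finitely many flats $H\cap H_i$ with $H\in\mA\setminus\{H_i\}$ (possible, as each has dimension below that of $W$); a small ball about $q$ meets only $H_i$ among the hyperplanes of $\mA$, and its two sides lie in regions $R_1,R_2$ of $\mA$ with $\pi(R_j)=R_j'$ and $\ty(\{R_1,R_2\})=H_i$, an edge outside $E^-$ projecting to $\{R_1',R_2'\}$. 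Finally, since every edge with both endpoints in a single fibre lies in $E^-$, the only loops produced by contracting $E^-$ are images of $E^-$-edges; hence, after identifying vertices with $\mR(\mA')$ via $\pi$ and deleting loops and parallel edges, the contraction of $E^-$ in $\mT(\mA)$ is a simple graph with vertex set $\mR(\mA')$ and edge set $E(\mT(\mA'))$, that is, it is $\mT(\mA')$.
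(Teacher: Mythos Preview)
The paper states this lemma without proof, so there is nothing to compare against; your argument supplies what the paper omits, and it is correct. Your two-step organisation is exactly right: identifying the $E^-$-components with the fibres of $\pi$ via the straight-line crossing argument inside the convex region $R'$, and then checking surjectivity of the remaining edges onto $E(\mT(\mA'))$ by picking a generic point on the wall. The only remark is that your alternative reduction to $k=1$ is actually the cleaner route and avoids the genericity verification entirely, since contraction of edges is associative and each single deletion merges at most two regions across one wall.
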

	
	\begin{defi}
		Let $\mA_1$ in $\R^n$ and $\mA_2$ in $\R^{n'}$ be hyperplane arrangements. We define the \defn{product} $\mA_1 \times \mA_2$ as\[\mA_1 \times \mA_2\defeq \{H\oplus \R^{n'}\mid H\in \mA_1\}\cup \{\R^n\oplus H\mid H\in \mA_2\}.\]
		If an arrangement $\mA$ can be written as the product of two non-empty arrangements, it is called \defn{reducible}. Otherwise, we call $\mA$ \defn{irreducible}.
	\end{defi}
	\begin{bem}
		The tope graph of $\mA_1 \times \mA_2$ is the product of the tope graphs of $\mA_1$ and $\mA_2$.
	\end{bem}
	
	The following \cref{prodHam} justifies focusing only on Hamiltonian cycles in irreducible hyperplane arrangements.
	\begin{prop}\label{prodHam}
		If a hyperplane arrangement $\mA$ is the product of two hyperplane arrangements $\A_1$ and $\A_2$ that have a Hamiltonian cycle, then $\mA$ has a Hamiltonian cycle.
	\end{prop}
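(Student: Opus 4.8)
The plan is to reduce the claim to a standard fact about Cartesian products of graphs and then exhibit the product cycle explicitly. By the Remark just above, the tope graph $\mT(\mA)$ is the Cartesian product $\mT(\A_1)\mathbin{\square}\mT(\A_2)$ of the two tope graphs, so it suffices to prove: if two finite graphs each contain a Hamiltonian cycle, then so does their Cartesian product. I would fix Hamiltonian cycles $C_1=a_0a_1\cdots a_{m-1}a_0$ of $\mT(\A_1)$ and $C_2=b_0b_1\cdots b_{n-1}b_0$ of $\mT(\A_2)$; since a graph admitting a Hamiltonian cycle has at least three vertices, $m,n\ge 3$. In $\mT(\mA)$ the vertices are the pairs $(a_i,b_j)$, with $(a_i,b_j)$ adjacent to $(a_{i'},b_j)$ whenever $a_ia_{i'}\in C_1$ and to $(a_i,b_{j'})$ whenever $b_jb_{j'}\in C_2$; in particular every edge between cyclically consecutive indices is available. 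Equivalently, it is enough to find a Hamiltonian cycle in the toroidal grid $C_1\mathbin{\square}C_2$, which is a spanning subgraph of $\mT(\mA)$ because $C_1$ and $C_2$ are spanning.

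The cycle I would write down is the usual snake (boustrophedon), using column $b_0$ for the return lane. First traverse all of row $0$: $(a_0,b_0),(a_0,b_1),\dots,(a_0,b_{n-1})$. Then drop to $(a_1,b_{n-1})$ and, for $i=1,\dots,m-1$, sweep row $i$ across the columns $b_1,\dots,b_{n-1}$ only, in decreasing order of $j$ when $i$ is odd and increasing order when $i$ is even, joining consecutive rows by the vertical edge at whichever of $b_1$, $b_{n-1}$ is their common endpoint — the choice of directions is exactly what makes these endpoints match up. After row $m-1$ the path sits at $(a_{m-1},b_1)$ if $m$ is even and at $(a_{m-1},b_{n-1})$ if $m$ is odd, and in either case it is adjacent to $(a_{m-1},b_0)$ along a cycle edge of $C_2$ (the edge $b_1b_0$, respectively $b_{n-1}b_0$). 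Finally run up the return lane, $(a_{m-1},b_0),(a_{m-2},b_0),\dots,(a_1,b_0),(a_0,b_0)$, which closes the cycle. I would then check the bookkeeping: row $0$ supplies the vertices with $i=0$, the sweeps supply all $(a_i,b_j)$ with $i\ge 1$ and $j\ge 1$, and the return lane supplies the $(a_i,b_0)$ with $i\ge 1$, so every vertex occurs exactly once; moreover every consecutive pair of vertices is one of the admitted edges. Hence the closed walk is a Hamiltonian cycle of $\mT(\mA)$.

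I do not expect a genuine obstacle here. The one place demanding care is the parity of $m$, since it determines the column at which the snake terminates and hence which $C_2$-edge ($b_1b_0$ or $b_{n-1}b_0$) is used to enter the return lane; both are legitimate edges because $C_2$ is a cycle, so the construction closes in either case. A one-line check that the small shapes ($n=3$, where each sweep is a single edge, and $m=3$) are covered by the same recipe completes the argument. Since Hamiltonicity of the Cartesian product of two Hamiltonian graphs is folklore, one could alternatively just cite it, but the explicit cycle above is short enough to include.
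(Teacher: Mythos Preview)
Your proof is correct and follows essentially the same approach as the paper: reduce to the Cartesian product of the two tope graphs and write down an explicit snake-type Hamiltonian cycle. The paper's snake is a touch simpler---it sweeps full rows back and forth with no return lane, tacitly using that tope graphs of central arrangements have an even number of vertices so the last row ends adjacent to the start---whereas your return-lane variant is parity-agnostic and hence slightly more robust.
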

	\begin{proof}
		Suppose the hyperplane arrangement $\mA$ is the product of two hyperplane arrangements $\A_1$ and $\A_2$ with Hamiltonian cycles $\{a_0, a_1, \ldots , a_r \}$ and $\{b_0, b_1, \ldots , b_s \}$, respectively.
		Because the tope graph of $\mA$ is the product of the tope graphs of $\mA_1$ and $\mA_2$, the cycle
		\begin{align*}
			\{&(a_0, b_0), (a_0,b_1), \ldots,(a_0,b_s),\\
			&(a_1, b_s), (a_1,b_{s-1}), \ldots,(a_1,b_0),\\
			&(a_2, b_0), (a_2,b_1), \ldots,(a_2,b_s),\\
			&(a_3, b_s), (a_3,b_{s-1}), \ldots, \\
			&\ldots, (a_r,b_0)\}
		\end{align*}
		(reading from left to right) is a Hamiltonian cycle for $\A_1 \times \A_2$.
	\end{proof}
	
	\section{Supersolvable oriented matroids}\label{section5}
	
	Supersolvable hyperplane arrangements appear regularly in the research of hyperplane arrangements \cite{mu2015supersolvability,cuntz2019supersolvable}. Supersolvable arrangements include the reflection arrangements of type $\An$, $\Bn$ and $\mI_m$ \cite{hoge2014supersolvable}.
	
	\begin{defi}
		We call an element $v$ of a ranked finite lattice $L$ \defn{modular} if\[\rk(v\vee w)+\rk(v\wedge w)=\rk(v)+\rk(w)\]
		for all $w\in L$. 
		A finite lattice $L$ is \defn{supersolvable} if it is ranked and there exists a maximal chain 
		\[\hat{0}=v_0\precdot v_1\precdot v_2 \precdot \ldots \precdot v_{n-1} \precdot v_n=\hat{1}\] of modular elements.
	\end{defi}
	\begin{defi}
		We call a hyperplane arrangement \defn{supersolvable} if its intersection lattice is
		supersolvable. We call an oriented matroid \defn{supersolvable} if its lattice of flats is supersolvable.
	\end{defi}
	\begin{theorem}\label{thm:supersolvableishamiltonian}
		The tope graphs of supersolvable oriented matroids and supersolvable hyperplane arrangements admit a Hamiltonian cycle.
	\end{theorem}
	
	To make the proof more accessible for the reader, we state the proof in terms of hyperplane arrangements and their regions. All oriented matroids can be represented as pseudosphere arrangements by \cite{FOLKMAN1978199}. These pseudospheres and their regions behave analogously to hyperplanes and their regions for supersolvable arrangements. All properties that we need for the proof are true for both supersolvable hyperplane arrangements and supersolvable oriented matroids \cite[Section 6]{BjEdZi}. This bridges the gap between oriented matroids and hyperplane arrangements and makes the proof work for oriented matroids analogously.
	
	Supersolvable arrangements have a recursive structure, which are important for the construction of a Hamiltonian cycle.
	\begin{theorem}[{\cite[Theorem 4.3.]{BjEdZi}}]\label{sszerlegung}
		All rank-2 hyperplane arrangements are supersolvable. 
		An arrangement $\mA$ of rank $n \geq 3$ is supersolvable if and only if it can be written as a disjoint union of arrangements $\mA=\mA_0\uplus\mA_1$ (with $\mA_1\neq\varnothing$), where $\mA_0$ is a supersolvable arrangement of rank $n-1$, and for any $H', H'' \in \mA_1$, there is an $H\in\mA_0$ such that $H'\cap H''\subseteq H$.
	\end{theorem}
	
	Recall the surjective map $\pi\colon\mR(\mA)\rightarrow\mR(\mA_0)$ on regions of (sub)arrangements from \cref{defi-surjSubArr}.
	
	\begin{defi}\label{deffiber}
		Let $\mA$ be a hyperplane arrangement with partition $\mA=\mA_0\uplus\mA_1$.
		We define the \defn{fiber} of a region $R\in \mR(\mA)$ as $\mF(R)\defeq\pi^{-1}(\pi(R))\subseteq \mR(\mA)$.
	\end{defi}
	Given a region $R$, the fiber $\mF(R)$ contains all regions that cover the same area as one region of $\mA_0$. By definition, the regions in $\mF(R)$ are separated only by hyperplanes in $\mA_1$. There is a one to one correspondence between the regions of $\mA_0$ and the fibers of $\mA$.
	
	\begin{figure}[htp]
		\centering
		\begin{tikzpicture}[dot/.style={circle,inner sep=1pt,fill,label={#1},name=#1},
			extended line/.style={shorten >=-#1,shorten <=-#1},
			extended line/.default=4cm,scale=.6]
			\coordinate (A) at (0,0);
			\coordinate (B) at (1,0);
			\coordinate (C) at (1.31,0.95);
			\coordinate (D) at (.5,1.54);
			\coordinate (E) at (-.31,0.95);
			\begin{scope}
				\clip (-3,-0.2) rectangle (6,4);
				\fill[orange,opacity=0.3] (0.5,0.688) -- ($(0.5,0.688)!8!(C)$) -- ($(0.5,0.688)!-8!(A)$) -- cycle;
				\draw[blue,extended line] (A) -- ($(C)!.5!(D)$);
				\draw[blue,extended line] (B) -- ($(D)!.5!(E)$);
				\draw[blue,extended line] (C) -- ($(E)!.5!(A)$);
				\draw[blue,extended line] (D) -- ($(A)!.5!(B)$);
				\draw[blue,extended line] (E) -- ($(B)!.5!(C)$);
				\draw[extended line] (A) -- (B);
				\draw[extended line] (B) -- (C);
				\draw[extended line] (C) -- (D);
				\draw[extended line] (D) -- (E);
				\draw[extended line] (E) -- (A);
				
				\node at (2.5,2) {$\mathcal{R}$};
			\end{scope}
		\end{tikzpicture}
		\caption{The blue hyperplanes are the hyperplanes of $\mA_0$. The shaded area is the fiber $\mF(R)$ of a region $R,$ see \cref{deffiber}.}
		\label{exampleFIber}
	\end{figure}
	From now on, for a supersolvable hyperplane arrangement $\mA$, let $\mA=\mA_0\uplus\mA_1$ always be the partition from \cref{sszerlegung}.
	
	\begin{theorem}[{\cite[Proof of Theorem 4.6.]{BjEdZi}}]\label{ssfiberh1}
		Let $\mA$ be a supersolvable hyperplane arrangement. The induced subgraph of every fiber is a path of length $\abs{\mA_1}$ and each hyperplane of $\mA_1$ occurs in the path as the type of edges exactly once.
	\end{theorem}
	
	\begin{defi}\label{canonicalRegion}
		A \defn{canonical base region} $B_0$ of $\mA$ is defined recursively using $\pi$. In an arrangement of rank 2, any region is canonical. For $\mA$ of rank $n>2$, a region $B_0$ is canonical if $\pi(B_0)$ is canonical and $B_0$ is an endpoint of the path from $\mF(B_0)$.
	\end{defi}
	
	From now on, let $B_0$ always be a canonical base region and all hyperplanes oriented such that $B_0$ is on the positive side of the hyperplanes.
	\begin{lemm}\label{epsLemm}
		Let $\mA$ be a supersolvable hyperplane arrangement. Then, for a region $B\in\mR(\mA_0)$, there exists vertices $\epsilon_+(B),\epsilon_-(B)\in \pi^{-1}(B)$ whose topes for $\mA_1$ have only positive signs or negative signs, respectively.
		
		In addition, the regions $\epsilon_+(B)$ and $\epsilon_-(B)$ are the endpoints of the path from $B$ and have only one neighbouring region in $\pi^{-1}(B)$. All other neighbours are $\epsilon_+(B')$ or $\epsilon_-(B')$, respectively, for neighbouring regions $B'\in\mR(\mA_0)$ to $B$.
	\end{lemm}
	\begin{proof}
		By \cref{sszerlegung} and the definition of an endpoint in a fiber, endpoints only have one neighbouring region inside the fiber and edges to all other incident fibers.
		
		To prove that $\epsilon_+(B)$ and $\epsilon_-(B)$ are well-defined and the endpoints of the path of $B$, it suffices to show that $\epsilon_+(B)$ is well-defined and an endpoint, since the endpoints have flipped topes for $\mA_1$.
		
		By \cref{canonicalRegion}, $B_0$ is an endpoint in $\mF(B_0)$ and $B_0=\epsilon_+(\pi(B_0))$. Let $B'$ be a neighbouring region to $B_0$ via an edge of type $H\in\mA_0$. Since we only switch a sign for one hyperplane in $\mA_0$, we have $B'=\epsilon_+(\pi(B'))$.
		
		Assume $B'$ is not an endpoint of the path in $\mF(B')$. Let $B_1'$ and $B_2'$ be the endpoints of $\mF(B')$. Let $S_i'$ be the set of hyperplanes in $\mA_1$ that have different signs in the tope of $B_i'$ for $i\in\{1,2\}$ compared to $B'$. By \cref{ssfiberh1}, we know that $S_1'\uplus S_2'=\mA_1$. Since $B_i'$ is an endpoint, it has a neighbour $B_i$ in $\mF(B_0)$ via an edge of type $H$. Let $S_i$ be the set of hyperplanes in $\mA_1$ that have different signs in the tope of $B_i$ compared to $B_0$. By construction, it holds that $S_i=S_i'$. Since $B_1$ and $B_2$ are on the path in $\mF(B_0)$, either $S_1\subseteq S_2$ or $S_2\subseteq S_1$ holds. This is a contradiction, since $S_1\cap S_2=S_1'\cap S_2'=\emptyset$.
		
		By induction on the distance to $B_0$, because $\mT(\mA_0)$ is connected, all end points reached by $B_0$ this way are the vertices $\epsilon_+(B)$ for all regions $B\in\mR(\mA_0)$. It follows, that for each fiber, $\epsilon_+(B)$ is well-defined and an endpoint.
		
		At last, outside of a fiber, the edges are only of types in $\mA_0$. The topes of $\mA_1$ do not change and $\epsilon_+$ regions have only other $\epsilon_+$ regions as neighbours, and the same is true for $\epsilon_-$ regions.
	\end{proof}
	
	\begin{proof}[Proof of \cref{thm:supersolvableishamiltonian}]
		We mainly argue about the sign patterns of topes.
		The argument then carries over verbatim from supersolvable arrangements to supersolvable oriented matroids.
		
		Let $\mA$ be a supersolvable hyperplane arrangement. We prove the statement by induction on $n = \rk(\mA)$. For $n=2$, all hyperplane arrangements trivially have a Hamiltonian cycle since their tope graphs are always just a cycle.
		
		Let $n>2$ and assume all supersolvable hyperplane arrangements with rank smaller than $n$ admit a Hamiltonian cycle in their tope graph.
		
		According to \cref{sszerlegung}, the subarrangement $\mA_0$ is supersolvable and of rank $(n-1)$.
		
		By the induction hypothesis, the tope graph of $\mA_0$ has a Hamiltonian cycle. We write $B_1,B_2,\ldots,B_{2k},B_1$ for the Hamiltonian cycle in $\mA_0$. Because of \cref{ssfiberh1}, we can traverse the fiber of each $B_i$ by a path meeting all regions in the fiber.
		
		Let $P_+(B_i)$ be the path of $B_i$ starting in $\epsilon_+(B_{i})$ and ending in $\epsilon_-(B_{i})$. Let $P_-(B_i)$ be defined accordingly.
		
		Consider a part of the Hamiltonian cycle $B_{i-1},B_i,B_{i+1}$. By Lemma \cref{epsLemm}, the regions $\epsilon_+(B_{i-1})$ and $\epsilon_-(B_{i-1})$ are neighbours of $\epsilon_+(B_{i})$ and $\epsilon_-(B_{i})$, respectively, since $B_{i-1}$ and $B_i$ are adjacent in the tope graph $\mT(\mA_0)$. The same holds for $\epsilon_+(B_{i+1})$ and $\epsilon_-(B_{i+1})$. W.l.o.g.,\ assume we started the path of $B_{i-1}$ in $\epsilon_+(B_{i-1})$. We traverse the path $P_+(B_{i-1})$ and end in $\epsilon_-(B_{i-1})$. There is an edge between $\epsilon_-(B_{i-1})$ and $\epsilon_-(B_{i})$. We can connect $P_+(B_{i-1})$ to $P_-(B_i)$ via this edge and traverse the path $P_-(B_i)$ and end in $\epsilon_+(B_{i})$, where we can go to $\epsilon_+(B_{i+1})$ in the next path $P_+(B_{i+1})$.
		In \cref{fig:sphereExampleSuper}, there is an example of how two paths in adjacent fibers traverse the region graph of a supersolvable arrangement.
		
		We can traverse all fibers by $P_+(B_1),P_-(B_2),P_+(B_3),\ldots,P_-(B_{2k})$ one after the other, because $\mA_0$ has an even number of regions. The path $P_-(B_{2k})$ ends in $\epsilon_+(B_{2k})$ and since $B_{2k}$ and $B_1$ are adjacent in $\mT(\mA_0)$, there exists an edge $(\epsilon_+(B_{2k}),\epsilon_+(B_{1}))$ that closes the Hamiltonian cycle for $\mT(\mA)$.
	\end{proof}
	
	\begin{figure}[htp]
		\centering
		\makebox[0.5\linewidth]{
			\includegraphics[page=1,width=0.45\textwidth]{./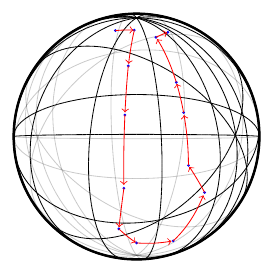}
		}
		\caption{An example of a part of a Hamiltonian cycle of a 3-dimensional arrangement, intersected with a sphere, traversing two neighbouring fibers.}
		\label{fig:sphereExampleSuper}
	\end{figure}
	\newpage
	\section{Restrictions of reflections arrangements}\label{section6}
	An important class of simplicial hyperplane arrangements is the class of finite reflection arrangements. In \cite{conway}, Conway, Sloane, and Wilks showed that their tope graphs have Hamiltonian cycles. Furthermore, Takato Inoue and Hiroyuki Yamane proved that the Cayley graphs of finite Weyl groupoids admit a Hamiltonian cycle \cite{yamane2021hamilton,inoue2023hamiltonian}.
	
	\begin{theorem}\label{allResRefl}
		All restrictions of reflection arrangements admit a Hamiltonian cycle.
	\end{theorem}
	
	There are only finitely many reflection arrangements for a given rank. These arrangements are characterised by the different Coxeter types, see \cite{coxeter1934discrete,grove1996finite}. The irreducible Coxeter types are $\mA_n$ ($n\geq 1$), $\mB_n$ ($n\geq 2$), $\mD_n$ ($n\geq 4$), $E_6$, $E_7$, $E_8$, $F_4$, $H_3$, $H_4$, and $\mI_m$ ($m=5$ or $m>7$), see \cite[Theorem 9]{coxeter1934discrete}. Using \cref{thm:supersolvableishamiltonian}, we can already construct Hamiltonian cycles for the reflection arrangements of type $\mA_n$, $\mB_n$ and $\mI_m$.
	
	Restrictions of simplicial arrangements are again simplicial arrangements. From \cite[Section 6.5]{orlik2013arrangements}  (as cited by \cite{sanyal22}), we know the relations of all reflection arrangements to their restrictions. In \cref{fig:hasse-diagram}, we visualise this by a Hasse diagram, where the lines represent which hyperplane arrangements we obtain when restricting to certain hyperplanes. The restrictions of reflection arrangements include all Weyl groupoids and crystallographic arrangements, see \cite{CuntzHeckenberger+2015+77+108}.
	
	In \cref{fig:hasse-diagram}, we can see that, except for the arrangements of type $\Dns$, there are only finitely many restrictions of reflection arrangements that are not combinatorially isomorphic to reflection arrangements. We call these the \defn{sporadic restrictions}.
	\begin{theorem}\label{sporadicRest}
		All sporadic restrictions admit a Hamiltonian cycle.
	\end{theorem}
	\begin{proof}
		Since there are only finitely many sporadic restrictions, we constructed an algorithm that efficiently computes the tope graph of simplicial arrangements and finds Hamiltonian cycles for them. We explain the algorithm in Appendix \labelcref{appendix:algo}. The graphs and Hamiltonian cycles of these restrictions can be found in Appendix \labelcref{appendix:sporadic}.\footnote{The algorithm, as well as the graphs, can be found on \cite{githubdb}.}
	\end{proof}
	The only restrictions of hyperplane arrangements that are not sporadic arrangements, are the arrangements of type $\Dns$. They are closely related to the arrangements of type $\A_{n-1}$ and $\mB_n$.
	Let $e_i$ be the $i$-th standard basis vector in $\R^n$, and recall that $H_{v} = \set{x \in \R^n \mid v^t x = 0}$ for $v \in \R^n\setminus\{0\}$.
	
	\begin{defi}
		We define the reflection arrangements of type $\mA_{n-1}$ in $\R^n$ as \[\A_{n-1} = \set{H_{e_i-e_j}\mid 1 \leq i < j \leq n}.\]
	\end{defi}
	\begin{bem}\label{bemAn}
		The $n!$ regions of $\A_{n-1},$ and thus also the vertices of its tope graph, are in bijection to the permutations in $\mathfrak{S}_n.$ 
		The $n!$ regions of $\A_{n-1}$ are in bijection to the permutations in $\mathfrak{S}_n$ in the following way. A hyperplane of type $H_{e_i-e_j}$ separates all $x\in \R^n\setminus H_{e_i-e_j}$ depending on which of the entries $x_i$ or $x_j$ is larger. Thus, every region is determined by a specific order of these entries. If we identify the region, where for a $x$ in its interior it holds that $x_1>x_2>...>x_n$ with the identity permutation. Then the permutation $\sigma$ is always identified with the region where $x_{\sigma(1)}>x_{\sigma(2)}>...>x_{\sigma(n)}.$
		
		Two vertices with corresponding permutations $\sigma,\sigma'\in \mathfrak{S}_n$ in $\mT(\mA_{n-1})$ are adjacent via an edge of type $H_{e_i-e_j}$ for some $i,j\in[n]$ if the permutations only differ by the simple transposition that swaps $i$ and $j$ because the order of the components of points in two regions that share a wall of type $H_{e_i-e_j}$ only differ in the entries of $\sigma(i)$ and $\sigma(j).$
	\end{bem}
	
	\begin{figure}[tp]
		\centering
		\makebox[\linewidth]{
			\includegraphics[page=1,width=\textwidth]{./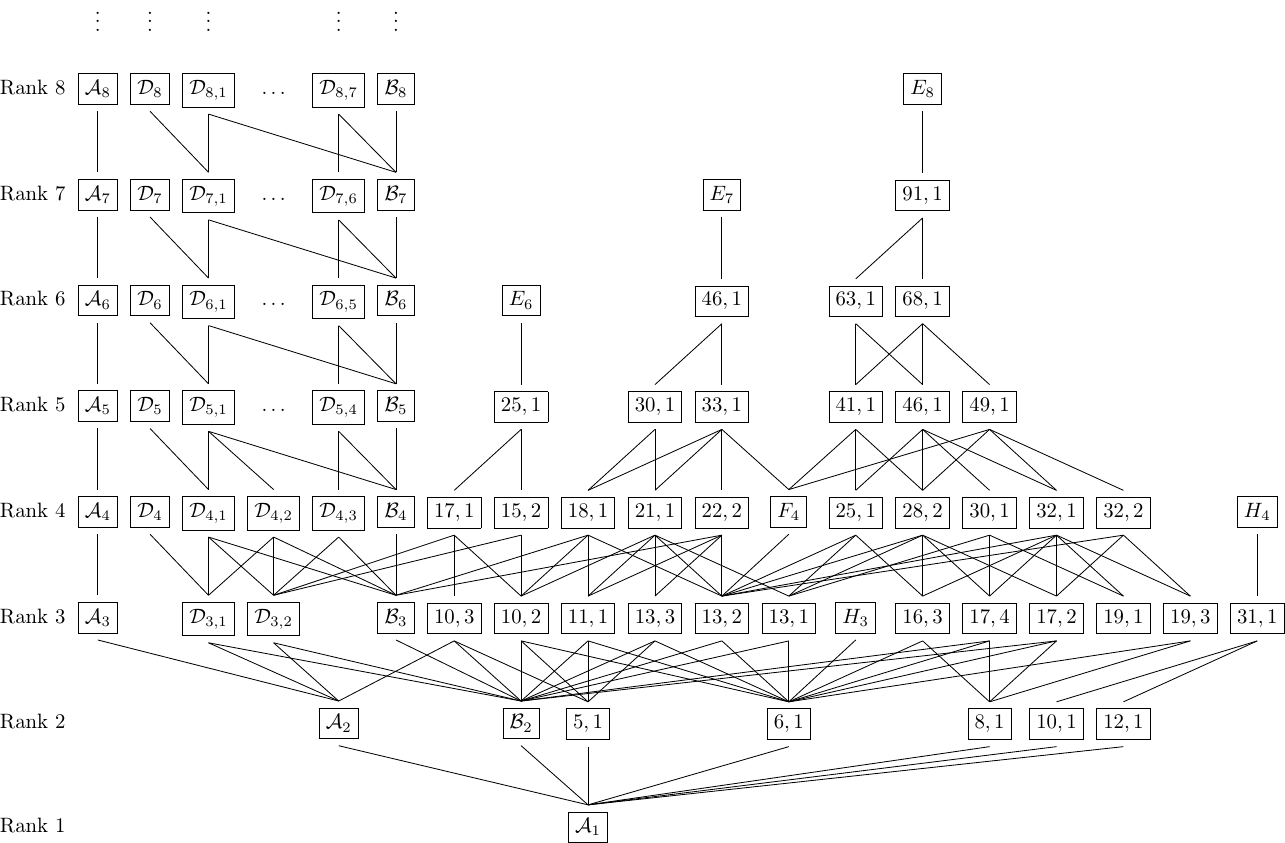}
		}
		\caption{A Hasse diagram showing how restrictions of reflection arrangements relate to each other. Here, $\boxed{m,k}$ refers to the $k$-th arrangement with $m$ hyperplanes, as listed in \cite{CEL}.}
		\label{fig:hasse-diagram}
	\end{figure}
	
	\begin{defi}
		We define the reflection arrangement of type $\Bn$ in $\R^n$ as\[\Bn = \set{H_{e_i\pm e_j} \mid 1 \leq i < j \leq n} \cup \set{H_{e_i} \mid 1 \leq i \leq n}.\]
	\end{defi}
	\begin{bem}\label{bemBn}
		The regions of $\mB_n$ are in bijection to the signed permutations $V\defeq \mathfrak{S}_n\rtimes \Z_2^n$, where every component of $\Z_2^n$ acts non-trivially on $\mathfrak{S}_n$, in the following way. 
		We start as in \cref{bemAn} and note that now additionally, the hyperplanes  $H_{e_i+e_j}$ also sort all $x\in\R^n\setminus H_{e_i+e_j}$ by the absolute values of their entries and the hyperplanes $H_{e_i}$ also determine their signs. 
		A signed permutation $(\sigma,\delta)$ is identified with the region where $|x_{\sigma(1)}|>|x_{\sigma(2)}|>...>|x_{\sigma(n)}|$ and $x_{\sigma(i)}>0$ exactly if $\delta_i=1.$
		
		
		Two vertices with corresponding signed permutations $(\sigma,\delta)$ and $(\sigma',\delta')$ are adjacent via an edge of type $H_{e_k}$ for some $k\in[n]$, if $\sigma=\sigma'$, $\delta(k)=-\delta'(k)$, $\delta(l)=\delta'(l)$ for all $l\ne k$, and $\sigma(n)=k$. This is because for a point in the interior of a region, as in the coordinate $\sigma(n)$ there is the entry with the smallest absolute value, the only wall of the form $H_{e_i}$ can be $H_{e_{\sigma(n)}}$. 
		Moreover, two such vertices are incident via an edge of type $H_{e_i\pm e_j}$ for some $i,j\in[n]$ if $\delta=\delta'$ and $\sigma$ and $\sigma'$ only differ by the simple transposition switching $i$ and $j,$ analogously as described in \cref{bemAn}.
		
		For a fixed $\delta'\in\Z_2^n$, we define
		\begin{align*}
			V_{\delta'}&\defeq\{(\sigma,\delta)\mid \delta=\delta'\} \text{, and}\\
			V_{\delta',j}&\defeq\{(\sigma,\delta)\mid \delta=\delta',\sigma(n)=j\}.
		\end{align*}
		We can see that $V_{\delta'}$ is the set of regions in the orthant of $\delta'$, which is visualised in \cref{exampleOrthant}. 
		The induced subgraphs $\mT(\Bn)[V_{\delta'}]$ is canonically isomorphic to the graph $\mT(\mA_{n-1})$ because every orthant in $\Bn$ is isomorphic to an arrangement of type $\An.$	The induced subgraph $\mT(\Bn)[V_{\delta',j}]$ is canonically isomorphic to the graph $\mT(\mA_{n-2}),$ as the corresponding regions can be seen as the regions in an orthant of $\mathcal{B}_{n-1}$, which arises as the restriction of $\Bn$ to the hyperplane $H_{e_j}$.
		Edges of these subgraphs are of type $H_{e_i-e_j}$ and $H_{e_k+e_l}$ for some $i,j,k,l\in [n]$, where the signs depend on the $\delta'$.
	\end{bem}
	\begin{figure}
		\centering
		\begin{tikzpicture}[scale=0.8]
			\coordinate (TL) at (-4,4);
			\coordinate (BL) at (-4,-4);
			\coordinate (TR) at (4,4);
			\coordinate (BR) at (4,-4);
			\coordinate (L) at (-4,0);
			\coordinate (R) at (4,0);
			\coordinate (T) at (0,4);
			\coordinate (B) at (0,-4);
			
			\coordinate (vari1) at (0.4, 0.7);
			\coordinate (vari2) at (-0.4, 0.7);		
			
			\coordinate (shiftright) at (0.4,0);
			\coordinate (shiftup) at (0,0.4);
			
			\draw (L) -- (R) node[right]{$H_{e_1}$};
			\draw (B) -- (T) node[above]{$H_{e_2}$};
			\draw (BL) -- (TR) node[anchor=south west]{$H_{e_1-e_2}$};
			\draw (TL) -- (BR) node[anchor=north west]{$H_{e_1+e_2}$};
			
			\draw[blue] ($(L)+(shiftup)$) to [bend right=35] ($(T)-(shiftright)$);
			\draw[blue] ($(L)+(shiftup)$) to [bend left=35] ($(T)-(shiftright)$);
			\draw[blue] ($0.5*(TL)+(vari1)$) node{$V_{(+,-)}$};
			
			\draw[blue] ($(R)+(shiftup)$) to [bend right=35] ($(T)+(shiftright)$);
			\draw[blue] ($(R)+(shiftup)$) to [bend left=35] ($(T)+(shiftright)$);
			\draw[blue] ($0.5*(TR)+(vari2)$) node{$V_{(+,+)}$};
			
			\draw[blue] ($(L)-(shiftup)$) to [bend right=35] ($(B)-(shiftright)$);
			\draw[blue] ($(L)-(shiftup)$) to [bend left=35] ($(B)-(shiftright)$);
			\draw[blue] ($0.5*(BL)-(vari2)$) node{$V_{(-,-)}$};
			
			\draw[blue] ($(R)-(shiftup)$) to [bend right=35] ($(B)+(shiftright)$);
			\draw[blue] ($(R)-(shiftup)$) to [bend left=35] ($(B)+(shiftright)$);
			\draw[blue] ($0.5*(BR)-(vari1)$) node{$V_{(-,+)}$};
		\end{tikzpicture}
		
		\caption{An example of the orthants for $\mB_2$}
		\label{exampleOrthant}
	\end{figure}
	
	\begin{defi}
		The hyperplane arrangement of type $\Dns$ with $n,s\in\mathbb{N}$ and $0\leq s\leq n$ is defined as\[\Dns = \set{H_{e_i\pm e_j} \mid 1 \leq i < j \leq n} \cup \set{H_{e_i} \mid 1 \leq i \leq s}.\]
		
		The arrangement $\Dns$ can be constructed by deleting $n-s$ coordinate hyperplanes from the arrangement of type $\Bn$ and thus, $\mD_{n,n}\cong \Bn$. They also arise as a restriction of $\mD_{n+s}$ to $s$ hyperplanes.
	\end{defi}
	\begin{theorem}[{\cite{conway}}]\label{thm:csw}
		The arrangements of type $\An$, $\Bn$, and $\Dn$ admit a Hamiltonian cycle.
	\end{theorem}
	
	It remains to show that the arrangements of type $\Dns$ also have Hamiltonian cycles. In the proof of \cref{thm:csw}, we connect Hamiltonian cycles in subgraphs to a common Hamiltonian cycle for the graph, as shown in \cref{glueingpic} and described in the following \cref{glueing}.
	\begin{lemm}\label{glueing}
		Let $G$ be a graph with subgraphs $G_1$ and $G_2$ that partition the set of vertices of $G$. Assume that $G_1$ and $G_2$ both have a Hamiltonian cycle and there exist edges $e_1\in E(G_1)$, $e_2\in E(G_2)$ and $f_1,f_2 \in E(G)\setminus(E(G_1)\cup E(G_2))$, such that $e_1$ and $e_2$ are used in the Hamiltonian cycles of $G_1$ and $G_2$, respectively. Additionally, assume that the edges $e_1,f_1,e_2,f_2$ form a closed cycle in $G$. Then, we can combine the two Hamiltonian cycles of the subgraphs into one Hamiltonian cycle of $G$.
	\end{lemm}
	\begin{proof}
		Let $G$ be a graph with subgraphs $G_1$ and $G_2$ fulfilling the requirements of the lemma. Let $C_1$ and $C_2$ be Hamiltonian cycles of $G_1$ and $G_2$ respectively. We now can remove the edges $e_1$ and $e_2$ from $C_1$ and $C_2$, and then connect $C_1$ and $C_2$ into one large cycle $C$ by adding the edges $f_1$ and $f_2$, as it can be seen in \cref{glueingpic}. Since $G_1$ and $G_2$ partition the vertex set of $G$, the cycle $C$ is a Hamiltonian cycle for $G$.
	\end{proof}
	We call the 4-cycles used in \cref{glueing} \defn{quadrilaterals}.
	\begin{figure}[htp]
		\centering
		\begin{tikzpicture}[x=0.75pt,y=0.75pt,yscale=1,xscale=1,scale=5]
			\draw (0,0)--(10,0);
			\draw [dashed](0,0)--(0,10) [color={rgb, 255:red, 155; green, 200; blue, 0 }  ,opacity=1 ];
			\draw [dashed](10,0)--(10,10) [color={rgb, 255:red, 200; green, 155; blue, 0 }  ,opacity=1 ];
			\draw (0,10)--(10,10);
			
			\draw(0,0).. controls (-10,-10) and (-10,20) ..(0,10)  [color={rgb, 255:red, 155; green, 200; blue, 0 }  ,opacity=1 ] ;
			\draw(10,0).. controls (20,-10) and (20,20) ..(10,10)  [color={rgb, 255:red, 200; green, 155; blue, 0 }  ,opacity=1 ] ;

			\fill (0,0)  circle (0.5);
			\fill (10,0) circle (0.5);
			\fill (0,10) circle (0.5);
			\fill (10,10) circle (0.5);
			
			\draw (1.5,1);
			\draw (8,1) ;
			\draw (1.5,8); 
			\draw (8,8) ;
			
			\draw (5,-2) node {$f_2$};
			\draw (11.5,5) node {$e_2$};
			\draw (5,11.5) node {$f_1$};
			\draw (-2,5) node {$e_1$};
			
		\end{tikzpicture}
		\caption{Connecting two smaller cycles in green and brown via the black edges $f_1,f_2$.}
		\label{glueingpic}
	\end{figure}
	
	\begin{theorem}\label{DnsRes}
		All arrangements of type $\Dns$ for $n\geq2$ admit a Hamiltonian cycle.
	\end{theorem}
	\begin{proof}
		Because $\mD_{n,n}\cong \Bn$ and $\mD_{n,0}\cong \Dn$, we only need to consider the arrangements $\Dns$ for all $1\le s\le (n-1)$. At first, we consider some base cases of $\Dns$. Trivially, $\mD_{2,1}$ admits a Hamiltonian cycle. The arrangements $\mathcal D_{3,s}$, $\mathcal{D}_{4,s}$ and $\mathcal{D}_{5,s}$ admit a Hamiltonian cycle and their Hamiltonian cycles can be found in \cref{appendix:sporadic}.
		
		Now, let $n\geq 6$. We prove that $\Dns$ is Hamiltonian for all $1\le s\le (n-1)$. To begin, we make some observations about $\Bn$ and $\Dns$. As described above in \cref{bemBn}, 
		every vertex in $\mT(\Bn)$ has exactly one adjacent edge of type $H_{e_j}$ for some $j\in[n]$. We can partition the vertices of $\mT(\Bn)$ into the sets $V_{\delta,j}$ for $\delta \in \Z_2^n$ and $j\in[n]$.
		The tope graph of the arrangement $\Dns$ arises as described in \cref{contracHam}, by contracting edges of type $H_{e_k}$ for all $k\in\{s+1,\ldots, n\}$. In this special case, it means that the subgraphs $G[V_{\delta,k}]$ and $G[V_{\delta',k}]$, where $\delta'$ only differs from $\delta$ in $k$ and which are isomorphic to $\mT(\mA_{n-2})$ get contracted into one subgraph isomorphic to $\mT(\mA_{n-2})$.
		
		Under the map $\pi\colon\mR(\Bn)\rightarrow\mR(\Dns)$ from \cref{defi-surjSubArr}, each region in $\Dns$ is the image of at most two regions of $\Bn$. If an edge $e$ between two regions from $\Bn$ is of type $H_{e_k}$ for $k\in\{s+1,\ldots, n\}$, then the two regions corresponding to the adjacent vertices of $e$ have the same image under $\pi$. On all other regions, $\pi$ is bijective.
		
		With these observations, also the tope graph $G$ of $\mD_{n,s}$ can be partitioned into subgraphs isomorphic to $\mT(\mA_{n-2})$. Each of these graphs is Hamiltonian by \cref{thm:csw}, and we pick the same Hamiltonian cycle on each subgraph. In this way, the Hamiltonian cycles of two adjacent subgraphs are mirrored along their separating hyperplane. Thus, the identification of them yields a unique well-defined Hamiltonian cycle.
		
		Now, we describe how to glue the Hamiltonian cycles of these subgraphs together. Therefor, we first construct a graph $\mI(G)$ by contracting all vertices  of each $G[V_{\delta,k}]$ subgraph into one vertex, respectively. The resulting graph has a vertex for each $G[V_{\delta,k}]$ subgraph and edges between them, if the $G[V_{\delta,k}]$ subgraphs are adjacent in $\mT(\Dns)$. This graph for $\mD_{2,1}$ can be seen in \cref{exampleIncident}. We show that for each spanning tree in $\mI(G)$, we can construct a Hamiltonian cycle for $\mT(\Dns)$ by connecting the Hamiltonian cycles of the $G[V_{\delta,k}]$ subgraphs along the edges of the spanning tree.
		Therefore, we have to show that such two adjacent subgraphs that are isomorphic to $\mT(\mA_{n-2})$ have a quadrilateral between them, and that we can connect them in such a way that these quadrilaterals do not overlap, that is, we obtain a Hamiltonian cycle for $\Dns$.
		
		Firstly, consider two neighbouring subgraphs $G[V_{\delta,j}]$ and $G[V_{\delta',j}]$ for some $j\in[s]$. These subgraphs are isomorphic to the tope graph of $\mA_{n-2}$ and have the same Hamiltonian cycle.
		Choose a permutation $\sigma_1\in \mathfrak{S}_n$ with $\sigma_1(n)=j$. Both graphs have a vertex that corresponds to $\sigma_1$. Then the vertices $(\sigma_1,\delta)$ and $(\sigma_1,\delta')$ have neighbouring vertices $(\sigma_2,\delta)$ and $(\sigma_2,\delta')$ in the Hamiltonian cycle, for a permutation $\sigma_2\in \mathfrak{S}_n$. These four vertices form a quadrilateral via the edge of type $H_{e_j}$. Therefore, we can connect the Hamiltonian cycles of any two $\mA_{n-2}$ subgraphs incident to edges of type $H_{e_j}$  by \cref{glueing}.
		
		Secondly, we consider two subgraphs $G[V_{\delta,j}]$ and $G[V_{\delta,k}]$ for a fixed $\delta\in\Z_2^n$ and $j,k\in[s]$ with $j\neq k$. Each vertex $(\sigma,\delta)\in V_{\delta,j}$ has a neighbour in $V_{\delta,k}$ if and only if $\sigma(n-1)=k$, as described before. In particular, the set of vertices in $V_{\delta,j}$ that have neighbours in $V_{\delta,k}$ is disjoint from the set of vertices that have neighbours in $V_{\delta,l}$ for $k\neq l$. Therefore, $V_{\delta,j}$ has disjoint sets of quadrilaterals for $V_{\delta,k}$ and $V_{\delta,l}$ respectively.
		
		Choose a vertex $(\sigma,\delta)\in V_{\delta,j}$ with $\sigma(n-1)=k$. In the Hamiltonian cycle of $G[V_{\delta,j}]$, the vertex $(\sigma,\delta)$ has at least one neighbour $(\sigma',\delta)$, such that the neighbours of $(\sigma,\delta)$ and $(\sigma',\delta)$ are neighbours in the Hamiltonian cycle in $G[V_{\delta,k}]$. This holds, because at most one incident edge of $(\sigma,\delta)$ can change the position of $\sigma(n-1)=k$, and we chose the same Hamiltonian cycle for $G[V_{\delta,j}]$ and $G[V_{\delta,k}]$. The chosen four vertices yield a quadrilateral, and we can use \cref{glueing}.
		
		Since we chose $(\sigma,\delta)\in V_{\delta,j}$ to be arbitrary, and for each $(\sigma,\delta)$ there is at least one suitable neighbour, we have at least $\tfrac{(n-2)!}{4}$ disjoint quadrilaterals along $G[V_{\delta,j}]$ and $G[V_{\delta,k}]$. This is because every vertex is contained in at least one quadrilateral with an adjacent vertex, and two quadrilaterals can share a common edge.
		
		Thirdly, consider a subgraph $G[V_{\delta,j}]$ with $j>s$. In $\mT(\Dns)$, this subgraph is the graph that resulted by contracting two subgraphs $G[V_{\delta,j}]$ and $G[V_{\delta',j}]$. We have to make sure that we use distinct quadrilaterals while connecting $G[V_{\delta,j}]$ to its neighbours. Otherwise, in the resulting graph after connecting Hamiltonian cycles and contracting the two subgraphs, there is a vertex of degree 3 as sketched in \cref{same4eck} and then the resulting union of edges cannot be a cycle anymore.
		
		\begin{figure}[tp]
			\centering\small
			\[\begin{tikzcd}
				&&& {H_{e_2}} \\
				&&& {\mathcal{A}_{n-2}} \\
				& {\mathcal{A}_{n-2}} &&&& {\mathcal{A}_{n-2}} \\
				{} &&&&&& {H_{e_1}} \\
				& {\mathcal{A}_{n-2}} &&&& {\mathcal{A}_{n-2}} \\
				&&& {\mathcal{A}_{n-2}} \\
				&&& {}
				\arrow[dotted, no head, from=1-4, to=7-4]
				\arrow[no head, from=2-4, to=3-6]
				\arrow[no head, from=3-2, to=2-4]
				\arrow[no head, from=3-2, to=5-2]
				\arrow[no head, from=4-1, to=4-7]
				\arrow[no head, from=5-2, to=6-4]
				\arrow[no head, from=5-6, to=3-6]
				\arrow[no head, from=6-4, to=5-6]
			\end{tikzcd}\]
			\caption{The graph $\mI(\mT(\mD_{2,1}))$ that shows how the $\mA_{n-2}$ subgraphs are incident to each other}
			\label{exampleIncident}
		\end{figure}
		
		\begin{figure}[b]
			\centering
			\begin{tikzpicture}[x=0.75pt,y=0.75pt,yscale=1,xscale=1,scale=10]
				
				\draw [color=green] (4,1)--(4,4)--(4,7);				
				\draw [color=green] (6,1)--(6,4)--(6,7);				
				\draw [color=green] (4,1)--(4,4)--(4,7)--(4,10);								
				\draw [color=green] (6,1)--(6,4)--(6,7)--(6,10);				
				\draw [color=blue] (4,4)--(4,7);				
				\draw [color=blue] (6,4)--(6,7);
				\draw [color=blue] (3.1,4.3)--(3.1,7.3);
				\draw [color=blue] (6.9,4.3)--(6.9,7.3);			
				\draw [color=blue](5,4.7)--(5,7.7);				
				
				\fill (4,4) circle (0.2pt);				
				\fill (6,4) circle (0.2pt);				
				\fill (4,1) circle (0.2pt);				
				\fill (6,1) circle (0.2pt);				
				\fill (4,7) circle (0.2pt);				
				\fill (6,7) circle (0.2pt);			
				\fill (4,10) circle (0.2pt);				
				\fill (6,10) circle (0.2pt);

				\draw[dashed] (4,4) -- (6,4);				
				\draw (4,4) .. controls (0,5) and (10,5) .. (6,4);			
				\draw [dashed] (4,1) -- (6,1);				
				\draw (4,1) .. controls (0,2) and (10,2) .. (6,1);				
				\draw [dashed](4,7) -- (6,7);			
				\draw (4,7) .. controls (0,8) and (10,8) .. (6,7);				
				\draw [dashed] (4,10) -- (6,10);				
				\draw (4,10) .. controls (0,11) and (10,11).. (6,10);
				
				\draw  (8,6)--(12,6);				
				\draw (8,5.8)--(12,5.8);			
				\draw (12.1,5.9)--(11.8,5.6);				
				\draw (12.1,5.9)--(11.8,6.2);	
				
				\draw [color=green] (14,3)--(14,6)--(14,9);			
				\draw [color=green] (16,3)--(16,6)--(16,9);			
				\fill (14,3) circle (0.2pt);				
				\fill (16,3) circle (0.2pt);			
				\fill (14,6) circle (0.2pt);		
				\fill (16,6) circle (0.2pt);			
				\fill (14,9) circle (0.2pt);			
				\fill (16,9) circle (0.2pt);

				\draw(14,3) .. controls (10,4) and (20,4) .. (16,3);					
				\draw  (14,6) .. controls (10,7) and (20,7) .. (16,6);				
				\draw  (14,9) .. controls (10,10) and (20,10) .. (16,9);

			\end{tikzpicture}
			\caption{On the left, the four black Hamiltonian cycles can be connected into two Hamiltonian cycles via the green quadrilaterals and two graphs get contracted into one.\\On the right, we see that the union of edges cannot be a Hamiltonian cycle anymore.}
			\label{same4eck}
		\end{figure}
		For $n\ge6$, we have at least $\tfrac{(n-2)!}{4}\ge\tfrac{4!}{4}=6$ disjoint quadrilaterals between each $G[V_{\delta,j}]$ and $G[V_{\delta,k}]$, and each subgraph $G[V_{\delta,j}]$ gets identified with at most one other subgraph $G[V_{\delta',j}]$. So, we have enough disjoint quadrilaterals to choose disjoint ones for each pair $G[V_{\delta,j}]$, $G[V_{\delta,k}]$ and $G[V_{\delta',j}]$, $G[V_{\delta',k}]$.
		
		In summary, for each of the subgraphs isomorphic to $\mT(\mA_{n-2}),$ we can connect its Hamiltonian cycle to each Hamiltonian cycle of such an incident subgraph. We choose an arbitrary spanning tree on $\mI(G)$ and connect the Hamiltonian cycles of the subgraphs isomorphic to $\mT(\mA_{n-2})$ along the spanning tree to create a common Hamiltonian cycle for $\mT(\Dns)$.
	\end{proof}
	
	\begin{proof}[Proof of \cref{allResRefl}]
		This follows directly from \cref{sporadicRest} and \cref{DnsRes}.
	\end{proof}
	
	\section{3-dimensional simplicial arrangements}\label{section4}
	
	Trivially, all 2-dimensional hyperplane arrangements are simplicial and admit a Hamiltonian cycle. However, in dimension 3 and above, we still not completely understand simplicial hyperplane arrangements. For the 3-dimensional simplicial hyperplane arrangements, there exists the Grünbaum--Cuntz catalogue \cite{CEL}. It incorporates 95 sporadic arrangements and three infinite families of simplicial arrangements \cite{grunbaumcatalog,cuntz2022greedy,27lines}. It is conjectured that the Grünbaum--Cuntz catalogue is complete up to combinatorial isomorphism \cite{cuntz2022greedy,grunbaumcatalog}. 
	
	So far, all simplicial hyperplane arrangements that we studied admit a Hamiltonian cycle. Also, the tope graph of simplicial hyperplane arrangements is a bipartite graph with odd-even invariance zero. This is a natural necessary condition for having a Hamiltonian cycle \cite{kemper2018odd}. Therefore, the question of Hamiltonian cycles in simplicial hyperplane arrangements seems natural.
	
	\begin{theorem}\label{3dsimp}
		All simplicial hyperplane arrangements in the Grünbaum--Cuntz catalogue admit a Hamiltonian cycle.
	\end{theorem}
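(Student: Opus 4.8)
The plan is to split the catalogue into its $95$ sporadic members, which I would dispatch by a finite computation, and its three infinite families, which need a uniform construction. By \Cref{prodHam} it suffices to treat irreducible arrangements, and since a rank-$3$ essential simplicial arrangement is the cone over a simplicial line arrangement in the projective plane, its tope graph is a connected $2$-fold cover of the region--adjacency graph of that line arrangement; in particular every such graph is cubic, because a simplicial region in rank $3$ has exactly three walls, and this $3$-regularity is a convenient sanity check on the input data.

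For the sporadic arrangements I would argue by explicit computation. Each arrangement is given in the catalogue by a list of normal vectors; from these one generates the regions together with their sign vectors, builds the tope graph by joining sign vectors that differ in a single coordinate across a genuine facet, and runs a Hamiltonian-cycle search on the resulting graph. Since the number of regions is bounded (quadratically in the bounded number of lines), a straightforward backtracking search --- or an encoding into a SAT or ILP instance --- terminates in each of the finitely many cases; as we only need one Hamiltonian cycle per arrangement there is nothing to optimise and no completeness claim about the search is required. The cycles found would be recorded in the ancillary material, and each computed tope graph cross-checked (vertex and edge counts, $3$-regularity) against the combinatorial invariants listed in the catalogue. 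A substantial subset of the $95$ are in fact supersolvable or restrictions of reflection arrangements, hence also covered by the later sections, but it is simplest to treat all of them uniformly by machine.

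The three infinite families --- the near-pencils, and the two families built from the lines and axes of symmetry of a regular polygon --- cannot be enumerated, so instead I would describe their tope graphs combinatorially as a function of the family parameter and write down a Hamiltonian cycle by hand. The near-pencil case (a pencil of $k$ concurrent lines together with one transversal) yields a region graph of transparent prism-like shape, for which a cycle is immediate; the polygon families carry a large cyclic symmetry, so one designs a single \emph{sector gadget}, determines how consecutive gadgets glue along the mirror lines, and threads the gadgets into one global cycle, with a short induction on the number of sides (together with an explicit treatment of the odd/even parity split) settling the base cases.

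The main obstacle is this last step: one must produce a model of each family's tope graph clean enough to reason about for all parameter values simultaneously --- in particular tracking exactly which antipodal pair of cones supplies which edge of the $2$-fold cover --- and then give a cycle construction that is manifestly valid for every parameter, not merely for the small instances one can draw. The sporadic step is conceptually routine but rests on the fidelity of the catalogue coordinates and of the search implementation, which is precisely why the consistency checks against the known combinatorics of each arrangement are included.
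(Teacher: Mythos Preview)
Your decomposition into the $95$ sporadic arrangements plus the three infinite families, and your computational treatment of the sporadic cases, matches the paper exactly.

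Where you diverge is in handling the infinite families $\mR(0)$, $\mR(1)$, $\mR(2)$. You propose to model each tope graph combinatorially as a function of the parameter and thread a Hamiltonian cycle through sector gadgets by hand, which you yourself flag as the main obstacle. The paper sidesteps this entirely: it observes (\cref{superr0r1r2}) that all three families are \emph{supersolvable} --- $\mR(1)$ and $\mR(2)$ by a result already in the literature, and $\mR(0)$ by noting that the pencil line is a modular coatom --- and then invokes the general \cref{thm:supersolvableishamiltonian} that every supersolvable arrangement has a Hamiltonian tope graph. Since that theorem is proved independently in \cref{section5} (by an inductive fiber-path construction along the supersolvable decomposition), the three families cost essentially nothing extra. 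Your direct-construction plan is viable in principle, but it trades a one-line reduction for three separate parameter-dependent arguments, each with its own parity and base-case bookkeeping; given that the supersolvable machinery is developed in the paper anyway, the paper's route is strictly cheaper.
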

	
	We constructed the tope graphs of all 95 sporadic arrangements and found Hamiltonian cycles for all of them, leading to the following theorem. All the constructed graphs with their cycles can be found in Appendix \cref{appendix:sporadic}.
	
	\begin{theorem}\label{95sporHam}
		All 95 sporadic arrangements of the Grünbaum--Cuntz catalogue admit a Hamiltonian cycle. 
	\end{theorem}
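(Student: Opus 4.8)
The approach is computational: I would reduce the statement to a finite verification carried out by computer, designed so that it outputs certificates checkable independently by elementary means. First I would take, for each of the $95$ sporadic arrangements, explicit coordinates for the normal vectors from the data accompanying the Grünbaum--Cuntz catalogue \cite{CEL}, and build the tope graph using the algorithm of \cref{appendix:algo}: starting from an arbitrary region, perform a breadth-first search that repeatedly flips a single sign, at each candidate flip deciding by a linear feasibility test whether the flipped sign vector is actually realized by a region, and recording the realized sign vectors in $\set{-,+}^m$ together with the type $\ty(e)$ of each wall crossed. The output for each arrangement is the finite vertex set $\mR(\mA)$ and the edge set of $\mT(\mA)$; since by definition two topes are adjacent exactly when their sign vectors differ in one coordinate, once the vertex set is known the graph is purely combinatorial.

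Next I would, on each of these finitely many explicit graphs, search for a Hamiltonian cycle. Because $\mT(\mA)$ is the subgraph of the cube $\set{-,+}^m$ induced on the realized topes, it is bipartite, with colour classes the topes having an even, respectively odd, number of $+$'s; a necessary precondition is that these classes have equal size, which I would record as holding in every case. The search itself is backtracking with the usual pruning rules --- forcing the two incident edges at any degree-$2$ vertex, and discarding partial paths that disconnect or unbalance the remaining graph --- which for the sizes occurring in the catalogue terminates quickly; any reducible member, should one occur, is handled instead by \cref{prodHam} from its irreducible factors.

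The step that actually constitutes the proof is the verification of the output: for each arrangement I would store the produced cyclic sequence of topes and check, with a short independent program, that it lists every element of $\mR(\mA)$ exactly once and that consecutive entries --- including the last together with the first --- differ in exactly one coordinate, so that each consecutive pair is genuinely an edge of $\mT(\mA)$. As a cross-check on the enumeration I would also confirm that $\abs{\mR(\mA)}$ matches the region count and other combinatorial invariants recorded for that arrangement in the catalogue. The theorem then reduces to inspecting $95$ short, machine-checkable certificates.

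The hard part is not mathematical but a matter of scale and of trusting the two ingredients: the region enumeration must be complete, since a missing or mislabeled tope would silently corrupt the graph, and the Hamiltonicity search must stay feasible on the largest arrangements in the catalogue. Both concerns are absorbed by the certificate philosophy above --- the enumeration is cross-checked against published invariants, and a Hamiltonian cycle, however it was found, is validated by trivial checks --- so the final verification never depends on the heuristics used to produce it. I expect no arrangement to fail, so beyond running the uniform procedure on all $95$ inputs there is no further case analysis to organize.
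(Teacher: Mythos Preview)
Your proposal is correct and is essentially the paper's approach: the authors likewise build the tope graph of each of the $95$ arrangements via the algorithm in \cref{appendix:algo}, search for a Hamiltonian cycle, and record the resulting cycles as explicit certificates in \cref{appendix:sporadic}. The only cosmetic difference is that the paper's enumeration exploits the simplicial structure (computing neighbouring regions from local root data) rather than your generic LP feasibility test for flipped sign vectors, but this affects only efficiency, not the logic of the verification.
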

	Grünbaum identified 3 infinite families, called $\mR(0)$, $\mR(1)$, and $\mR(2)$ of arrangements in $\R^3$ \cite{grunbaumcatalog}. We represent $\mR(0)$, $\mR(1)$, and $\mR(2)$ in the real projective plane. This consists of all the points of the real Euclidean plane, together with the \emph{line at infinity}, that is, the set of \emph{points at infinity}, which can be interpreted as the set of lines passing through the origin of the Euclidean plane. This way of representing those projective arrangements avoids the need of an algebraic representation and makes the visual verification of the simplicial character of the arrangements that we are considering easy.
	\begin{defi}[The family $\mR(0)$]\label{def:r0}
		The hyperplane arrangements of the family $\mR(0)$ are called the \defn{near-pencil} arrangements.
		These arrangements consist of $m$ hyperplanes for $m \geq 3$, where $m-1$ hyperplanes all intersect in one line, called the \emph{pencil}, and the last hyperplane does not contain that line.
	\end{defi}
	\begin{figure}[ht]
		\centering
		\begin{tikzpicture}[dot/.style={circle,inner sep=1pt,fill,label={#1},name=#1},
			extended line/.style={shorten >=-#1,shorten <=-#1},
			extended line/.default=4cm,
			scale = .2]
			
			\clip (0,0) circle (10);
			\draw[extended line] (10,0)--(-10,0);
			\draw[extended line] (0,10)--(0,-10);
			\draw[extended line] (-8,6)--(8,-6);
			\draw[extended line] (8,6)--(-8,-6);
			\draw[extended line] (6,8)--(-6,-8);
			
			\draw (5,10)--(5,-10);
			
		\end{tikzpicture}
		\caption{A view of the near-pencil arrangement $\mR(0)$ with 6 hyperplanes and 20 regions}
		\label{r0}
	\end{figure}
	\begin{defi}[The family $\mR(1)$]\label{def:r1}
		The hyperplane arrangements of the family $\mR(1)$ consist of $2m$ hyperplanes for $m \geq 3$. Starting with a regular $m$-gon in the real projective plane that does not intersect the line at infinity, we obtain $\mR(1)$ by taking the $m$ hyperplanes determined by the edges of the $m$-gon and the $m$ hyperplanes determined by the axes of mirror symmetry of the $m$-gon.
	\end{defi}
	\begin{figure}[htp]
		\centering
		\begin{tikzpicture}[dot/.style={circle,inner sep=1pt,fill,label={#1},name=#1},
			extended line/.style={shorten >=-#1,shorten <=-#1},
			extended line/.default=4cm,scale=.6]
			\coordinate (A) at (0,0);
			\coordinate (B) at (1,0);
			\coordinate (C) at (1.31,0.95);
			\coordinate (D) at (.5,1.54);
			\coordinate (E) at (-.31,0.95);
			\begin{scope}
				\clip (.5,.688) circle (3.3);
				\draw[extended line] (A) -- ($(C)!.5!(D)$);
				\draw[extended line] (B) -- ($(D)!.5!(E)$);
				\draw[extended line] (C) -- ($(E)!.5!(A)$);
				\draw[extended line] (D) -- ($(A)!.5!(B)$);
				\draw[extended line] (E) -- ($(B)!.5!(C)$);
				\draw[extended line] (A) -- (B);
				\draw[extended line] (B) -- (C);
				\draw[extended line] (C) -- (D);
				\draw[extended line] (D) -- (E);
				\draw[extended line] (E) -- (A);
			\end{scope}
		\end{tikzpicture}
		\caption{A view of an arrangement from the $\mR(1)$ family with 10 hyperplanes and 60 regions}
		\label{r1}
	\end{figure}
	\begin{defi}[The family $\mR(2)$]\label{def:r2}
		The hyperplane arrangements of the family $\mR(2)$ consist of $4m+1$ hyperplanes for $m \geq 2$. A hyperplane arrangement in $\mR(2)$ with $4m+1$ hyperplanes is obtained from the one in $\mR(1)$ that has $4m$ hyperplanes by adding the \emph{line at infinity} in the model of the projective plane.
	\end{defi}	
	\begin{figure}[thp]
		\centering
		\begin{tikzpicture}[dot/.style={circle,inner sep=1pt,fill,label={#1},name=#1},
			extended line/.style={shorten >=-#1,shorten <=-#1},
			extended line/.default=4cm,scale=.6]
			\coordinate (A) at (0,0);
			\coordinate (B) at (1,0);
			\coordinate (C) at (1.5,0.87);
			\coordinate (D) at (1,1.73);
			\coordinate (E) at (0,1.73);
			\coordinate (F) at (-.5,.87);
			\begin{scope}
				\clip (.5,.866) circle (3);
				\draw[extended line] (A) -- (B);
				\draw[extended line] (B) -- (C);
				\draw[extended line] (C) -- (D);
				\draw[extended line] (D) -- (E);
				\draw[extended line] (E) -- (F);
				\draw[extended line] (F) -- (A);
				
				\draw[extended line] (A) -- (D);
				\draw[extended line] (B) -- (E);
				\draw[extended line] (C) -- (F);
				
				\draw[extended line] ($(A)!.5!(B)$) -- ($(D)!.5!(E)$);
				\draw[extended line] ($(B)!.5!(C)$) -- ($(E)!.5!(F)$);
				\draw[extended line] ($(C)!.5!(D)$) -- ($(A)!.5!(F)$);
			\end{scope}
			\draw (3.6,.866) arc (0:90:3.1) node[midway, anchor = south west]{$\infty$};
		\end{tikzpicture}
		\caption{A view of an arrangement from the $\mR(0)$ family with 25 hyperplanes and 96 regions}
		\label{r2}
	\end{figure}
	\begin{bsp}
		\cref{r0,r1,r2} show representations of arrangements of each infinite family. Each is obtained by intersecting the arrangement with a generic affine hyperplane $H$ and showing the resulting lines. A parallel linear plane to $H$ is represented as the line at infinity.
		In \cref{r0}, the diagram shows 3 bounded and 14 unbounded regions. The unbounded regions extend beyond $H$, and the bounded regions each have a corresponding bounded region on the other side of $H$. Therefore, we obtain 20 regions in total.
		In \cref{r1}, the same reasoning yields 20 bounded and 20 unbounded regions, for a total of 60 regions.
		In \cref{r2}, we have a parallel hyperplane to $H$. Therefore, each region we see in \cref{r2} has a corresponding region beyond this hyperplane, and thus, we have 96 regions in total (48 regions on each side).
	\end{bsp}
	\begin{lemm}\label{superr0r1r2}
		The hyperplane arrangements in the three infinite families $\mR(0), \mR(1)$, and $\mR(2)$ are supersolvable.
	\end{lemm}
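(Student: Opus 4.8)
The plan is to exhibit, for each of the three families, a \emph{modular coatom} of the intersection lattice, that is, a rank-$2$ flat $X$ with $\rk(X\vee w)+\rk(X\wedge w)=\rk(X)+\rk(w)$ for every $w$, and then to complete it to a modular maximal chain $\hat{0}\precdot H\precdot X\precdot\hat{1}$ by any hyperplane $H$ with $X\subseteq H$. This suffices because in a geometric lattice of rank $3$ every atom is automatically modular: for an atom $H$ the only non-obvious case is a rank-$2$ flat $w$ with $H\not\le w$, and then $H\vee w\ge w$ has rank at least $2$ but cannot equal $w$, so $\rk(H\vee w)=3=\rk(H)+\rk(w)$ while $\rk(H\wedge w)=0$; as $\hat{0}$ and $\hat{1}$ are also modular, the displayed chain is modular and the arrangement supersolvable. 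It is convenient to pass to the projective plane: a central rank-$3$ arrangement $\mA$ in $\R^3$ is a line arrangement in $\mathbb{P}^2$ whose rank-$2$ flats are exactly the points lying on at least two of its lines, including points at infinity where parallel lines meet. In this language a point $p$ lying on the arrangement lines $m_1,\dots,m_t$ is a modular coatom precisely when every other intersection point of the arrangement lies on one of $m_1,\dots,m_t$. For $\mR(0)$ this is immediate with $p$ the pencil point: besides $p$, the only intersection points are the $m-1$ points in which the remaining line meets the pencil lines, and each of these lies on a pencil line through $p$, so $p$ is a modular coatom.

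For $\mR(1)$ I would take $p=O$, the centre of the regular $m$-gon. The $m$ mirror axes are exactly the arrangement lines through $O$, since a sideline of the polygon does not pass through its centre, so it remains to check that every intersection point other than $O$ lies on a mirror axis. An intersection of a sideline with an axis lies on that axis, and two distinct mirror axes meet only at $O$; the remaining case is an intersection $q=\ell(s)\cap\ell(s')$ of the lines containing two distinct sides $s,s'$. Here the key point is that the dihedral group $D_m$ contains a reflection $\sigma$, whose axis $a$ is one of the mirror axes, with $\sigma(s)=s'$: take any symmetry of the polygon carrying $s$ to $s'$ and, if it happens to be a rotation, multiply by the reflection fixing $s$; being of order $2$, $\sigma$ then also carries $s'$ to $s$. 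Consequently $\sigma$ interchanges $\ell(s)$ and $\ell(s')$ and therefore fixes $q$. The fixed points of a reflection of $\mathbb{P}^2$ form its axis together with the single point at infinity perpendicular to that axis; since $\sigma$ does not fix $\ell(s)$, the point $q$ cannot be this perpendicular point at infinity, and hence $q\in a$, a mirror axis. (When $m$ is even this argument applies uniformly to the points at infinity of pairs of parallel opposite sidelines.) Thus $O$ is a modular coatom of $\mR(1)$.

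For $\mR(2)$ I would keep $p=O$, the centre of the regular $2m$-gon underlying $\mR(1)$. The arrangement lines through $O$ are still only the $2m$ mirror axes, as the adjoined line at infinity does not pass through $O$. Every intersection point of two lines of the $\mR(1)$-part other than $O$ lies on a mirror axis by the previous paragraph, and the remaining intersection points have the form $\ell_\infty\cap m$ with $m$ a line of the $\mR(1)$-part: such a point lies on $m$ when $m$ is a mirror axis, and when $m=\ell(s_k)$ is a sideline it is the point at infinity of $s_k$, which coincides with $\ell(s_k)\cap\ell(s_{k+m})$ for the parallel opposite side $s_{k+m}$ and so lies on a mirror axis by the argument above. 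Hence $O$ is a modular coatom of $\mR(2)$, which finishes all three cases.

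The crux is the $\mR(1)$ case, namely that the lines containing any two sides of a regular polygon meet on a mirror axis; the content is the symmetry argument sketched above, and its only delicate point is the behaviour at infinity --- ruling out that the fixed point $q$ of the swapping reflection is the orthogonal point at infinity rather than a point of the axis, which is exactly where it matters that $\sigma$ genuinely moves $\ell(s)$. The remainder is the bookkeeping of enumerating the few types of intersection point, which is short in each family.
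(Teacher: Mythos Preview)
Your argument is correct. For $\mR(0)$ it coincides with the paper's: the pencil point is a modular coatom, and in rank~$3$ a modular coatom together with any atom below it yields a modular maximal chain. For $\mR(1)$ and $\mR(2)$ the paper simply cites \cite[Lemma~4.7]{cuntzsupersolvablesimplicial}, whereas you supply a self-contained argument: you take the centre $O$ of the regular polygon as the candidate coatom and use the dihedral symmetry to show that any two sidelines meet on a mirror axis (the reflection swapping the two sides fixes their intersection, and you correctly rule out the isolated projective fixed point perpendicular to the axis). The extension to $\mR(2)$ via the line at infinity, using that the underlying polygon is a $2m$-gon with parallel opposite sides, is handled cleanly. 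So the strategy---exhibit a modular coatom in rank~$3$---is the same as the paper's, but your proof is more explicit and avoids the external reference; the trade-off is a few extra lines of elementary projective geometry in place of a citation.
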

	\begin{proof}
		It is proven in \cite[Lemma 4.7]{cuntzsupersolvablesimplicial} that $\mR(1)$ and $\mR(2)$ are supersolvable arrangements.
		
		Consider a near-pencil arrangement. Since we are in dimension 3, it suffices to find a modular coatom in the intersection lattice. It is easy to check that the intersection line where $m-1$ of the hyperplanes all intersect is modular. Therefore, its intersection lattice is supersolvable. 
	\end{proof}
	\begin{theorem}\label{3infiHam}
		The arrangements $\mR(0), \mR(1)$, and $\mR(2)$ of the Grünbaum--Cuntz catalogue admit a Hamiltonian cycle. 
	\end{theorem}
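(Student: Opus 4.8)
The plan is to derive \cref{3infiHam} directly from \cref{superr0r1r2} together with the general result, established later in the paper, that the tope graph of every supersolvable hyperplane arrangement admits a Hamiltonian cycle. Since \cref{superr0r1r2} shows that each member of the families $\mR(0)$, $\mR(1)$, and $\mR(2)$ is supersolvable, that general theorem applies to each of them and \cref{3infiHam} follows with no additional work; in particular, no construction tailored to these three families is required. The honest assessment is that for this statement there is no real obstacle at all: the whole difficulty has been offloaded into the general supersolvable theorem (and into \cref{superr0r1r2}), and here one merely invokes them.

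The substantive ingredient is therefore the general statement about supersolvable arrangements. I would prove it by induction along a maximal chain $\hat{0} = v_0 \precdot v_1 \precdot \cdots \precdot v_n = \hat{1}$ of modular elements of the intersection lattice, exploiting the inductive (fibration-like) structure such a chain provides: deleting an appropriate set of hyperplanes associated with the top modular step and applying \cref{contracHam} passes to a supersolvable arrangement of smaller rank, which is Hamiltonian by induction; the deleted hyperplanes then organise the topes of $\mA$ into consecutive ``layers'' sitting over the regions of that smaller arrangement, each layer being the tope graph of a pencil, i.e. a single cycle that opens into a Hamiltonian path on the layer. One stitches these per-layer paths into one global Hamiltonian cycle, alternating the traversal direction in the spirit of the product construction of \cref{prodHam}. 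The delicate point --- the technical heart of that later section --- is making sure consecutive layers are always joined by an edge of the correct type at a compatible position, which is exactly the edge-type bookkeeping governed by \cref{contracHam}; for \cref{3infiHam} it suffices to cite it.

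For concreteness, and to make clear that these particular families need nothing deep, one can also verify $\mR(0)$ by hand. Deleting the transversal hyperplane from a near-pencil of $m$ hyperplanes leaves a pencil of $m-1$ hyperplanes, whose tope graph is the cycle on $2(m-1)$ vertices; since the transversal meets the common line of the pencil only in the origin, reintroducing it splits every region into exactly two, and a direct check of adjacencies shows the resulting tope graph is precisely the prism $C_{2(m-1)} \times K_2$, which is manifestly Hamiltonian. The families $\mR(1)$ and $\mR(2)$ can be handled by the same layered picture, using the supersolvability established in \cite[Lemma 4.7]{cuntzsupersolvablesimplicial}. In every version the only thing one has to be careful about is tracking edge types so that adjacent layers are glued consistently, and this is exactly what the general supersolvable result subsumes.
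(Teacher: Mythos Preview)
Your proposal is correct and matches the paper's proof exactly: the paper also simply invokes \cref{superr0r1r2} together with \cref{thm:supersolvableishamiltonian} and nothing more. Your additional direct verification for $\mR(0)$ via the prism $C_{2(m-1)}\times K_2$ is a correct bonus that the paper does not include.
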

	\begin{proof}
		By \cref{superr0r1r2} the hyperplane arrangements in the three infinite families are supersolvable. From \cref{thm:supersolvableishamiltonian}, it follows that they admit a Hamiltonian cycle
	\end{proof}
	
	\begin{proof}[Proof of \cref{3dsimp}]
		This follows directly from \ref{95sporHam} and \ref{3infiHam}.
	\end{proof}
	\clearpage
	
	\clearpage
	\appendix

\section{The algorithm}\label[secinapp]{appendix:algo}
We describe an algorithm to calculate the tope graph of simplicial arrangements. We use this code to check sporadic examples on Hamiltonicity and create a database of graphs for examples of simplicial arrangements, which can be found at \cite{githubdb} and uses the \texttt{SageMath} mathematics software system.

Our algorithm needs one known region of the arrangement. We can compute it from a given positive system. At first, we calculate the cone generated by all roots of the positive system. We can optimise in a generic direction to find one generating ray of the cone. By normalizing all other normal vectors to the generic direction, we get a simplicial cone in smaller dimension. Now we can inductively find all generating rays of this cone. This gives us a region of the simplicial arrangement, which is bounded by the hyperplanes of the found roots.

From this starting region, we go through the whole hyperplane arrangement, creating vertices for each newly found region and connecting these, if they are adjacent to the same wall.

Our algorithm works as follows.
We save each visited region as a tope of the starting positive system. Each time we discover a new region, we append the region with all walls (except the wall from which we are coming) to a stack. By working through the whole stack, we ensure that we have found all possible regions and walked through all possible edges of the tope graph.
In each step, we pop the last region from the stack. If there is more than one wall of this region left to check, we append it again to the stack with one wall less to check. For this wall, we create the neighbouring region.
To create a neighbouring region of our current region, we take each pair of the separating walls and one other wall of the current region. Now, we search all positive roots that are generated by the roots of the chosen walls. These roots build a 2-dimensional arrangement. We search the root, which is closest to the root of the separating wall. This closest root forms a new generating system with the negative root of the separating wall. Consequently, the walls of the neighbouring region are described by the newly found root with the negative separating wall root. If we do not find any root generated by the pair of walls, these walls stay walls of the neighbouring region.

Since we do this step for all walls of the current region and all regions are simplicial, we find all the walls of the neighbouring region.
If the new region was not already found, we add it to the tope graph with the connecting edge from the separating wall and put the region with its other walls to check on the stack, otherwise, we just add the connecting edge. This lets us create the whole tope graph of the simplicial hyperplane arrangement.

Now, we explain how we do the calculations in the iterative step. Given a region, we have a list of unit vectors and non-negative vectors which describe the positive system of the region and how the roots of the walls generate the other positive roots. We can create that list by doing a basis change operation on the positive roots given the generating system of them.

With this description, one can quickly calculate which roots are generated by two unit roots. Then we can easily calculate the closest root by taking the root with the largest quotient of their two entries. The entry of the root of a separating wall acts as the numerator for that quotient. If we look at the 2-dimensional hyperplane arrangement, we see that this characterises the closest root. At last, after finding each root for the walls of the new region, we can calculate the new list of unit vectors and non-negative vectors for the positive system of the new region from the old list of roots. Let $e_i$ and $e_j$ be the unit vectors of a pair of walls in the old region, where $e_i$ is the unit vector of the separating root. We multiply the $i$-th entry of all roots by minus one, except for the separating root itself. Let $q$ be the quotient of the new root, which we calculated before, and $d$ the denominator of this quotient. We add $q$ times its $j$-th entry to the $i$-th entry of each root and then divide its $j$-th entry by $d$. We do this for all pairs of walls, where we found new walls in the new region. This corresponds to the change of basis from the old generating root system to the new one. In conclusion, we are able to calculate every step in the algorithm described before.
\begin{theorem}
	Our algorithm computes the tope graph of a given simplicial arrangement, given the positive system of the arrangement.
\end{theorem}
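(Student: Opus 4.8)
The plan is to verify the algorithm in three stages, matching its three conceptual parts: (a) the construction of one starting region together with its walls from the given positive system; (b) the subroutine that, given a simplicial region $R$ and one of its walls lying in a hyperplane $H\in\mA$, returns the complete set of walls of the region $R'$ adjacent to $R$ across that wall; and (c) the stack-based traversal, which I claim terminates and visits every region and every edge of $\mT(\mA)$ exactly once, so that the returned graph is isomorphic to $\mT(\mA)$.

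For (a), I would first observe that optimising a generic linear functional over the cone spanned by the roots of the positive system singles out an extreme ray of that cone; here one uses that $\mA$ is simplicial, so this cone is simplicial and its extreme rays correspond bijectively to its facet hyperplanes. Normalising the remaining roots against the generic direction yields a simplicial cone of dimension one smaller, and iterating recovers all generating rays. Their spanning cone is the desired region of $\mA$, its facet hyperplanes are exactly the walls, and the tope is read off by evaluating the chosen normals; I would check that the recursion bottoms out at rank $1$ and that genericity together with simpliciality prevents any hyperplane of $\mA$ from cutting through the interior of the resulting cone.

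Step (b) is the heart of the argument. The key fact is a local description of region-adjacency in a simplicial arrangement: if $R$ is simplicial with walls in $H_1,\dots,H_n$ and we cross the wall in $H_i$, then for each $j\neq i$ the region $R'$ has exactly one wall separating the $i$-th and $j$-th wall directions, and that wall lies in the unique hyperplane through the rank-$2$ flat $H_i\cap H_j$ which is adjacent, on the far side of $H_i$, to $H_i$ in the pencil of hyperplanes through that flat; and if no hyperplane other than $H_i,H_j$ passes through $H_i\cap H_j$, then $H_j$ itself remains a wall of $R'$. I would prove this by passing to the quotient by $H_i\cap H_j$, which turns the hyperplanes through that flat into a central rank-$2$ arrangement, i.e.\ a pencil of lines through a point; for a pencil the adjacency is completely explicit, since crossing one bounding line of a sector lands one in the neighbouring sector, bounded by the angularly next line. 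The rule used by the algorithm, taking the root with the largest quotient of its two relevant entries, is precisely the selection of that angularly nearest line, and the attendant bookkeeping --- flipping the $i$-th entry of every root except the separating one, then adding $q$ times the $j$-th entry to the $i$-th entry and dividing the $j$-th entry by the denominator $d$ --- I would identify with the change of coordinates from the generating system of $R$ to that of $R'$, checking that it is a linear isomorphism preserving the nonnegativity data the algorithm records. Carrying this out for all $n-1$ pairs $\{i,j\}$ and adjoining $H_i$ with reversed orientation yields $n$ walls; since $R'$ is also simplicial it has exactly $n$ walls, and each of its walls is separated from $R$ only across some flat $H_i\cap H_j$, so these are all of them.

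For (c), termination holds because $\mA$ has finitely many regions and a region is re-pushed onto the stack only with a strictly shorter list of walls still to process, so only finitely many stack entries are ever created. Completeness holds because $\mT(\mA)$ is connected: starting from the initial region and, at every popped region, generating the neighbour across each unprocessed wall, an induction on distance in $\mT(\mA)$ shows that every region is eventually discovered, and every edge is recorded exactly once thanks to the already-found test --- which is sound because distinct regions receive distinct topes of the starting positive system, and which also discards loops and multiple edges. I expect step (b) to be the main obstacle: making the reduction to the rank-$2$ pencil rigorous, and confirming that the closest-root rule together with the change-of-basis formulae genuinely produces the correct generating system of the adjacent region --- including the degenerate case in which a pair of walls spans only itself --- is where the real work lies, while the remaining parts reduce to standard properties of simplicial cones and a routine graph-traversal invariant.
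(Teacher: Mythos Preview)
Your proposal is correct and in fact substantially more detailed than the paper's own treatment. The paper does not give a formal proof at all: it simply states that ``the correctness of our algorithm follows from the description'' and refers the reader to the code. Your three-stage decomposition (starting region, neighbour computation via rank-$2$ reduction, stack traversal) is exactly the structure latent in the paper's algorithm description, and your justification of step~(b) --- passing to the pencil through the flat $H_i\cap H_j$ and identifying the closest-root rule with selecting the angularly adjacent line --- spells out precisely what the paper's informal ``closest root in the 2-dimensional arrangement'' is implicitly invoking. So you are not taking a different route; you are supplying the argument the paper omits.
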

The correctness of our algorithm follows from the description. The code for the algorithm can be found in the attached \texttt{SageMath} script and in \cite{githubdb}.

\section{Hamilton cycles in sporadic examples}\label[secinapp]{appendix:sporadic}

The graphs and Hamilton cycles of all arrangements of interest can be found in the attached database text file and in \cite{githubdb}.
\end{document}